\documentclass[runningheads]{llncs}
\usepackage[T1]{fontenc}
\usepackage{xcolor}
\usepackage{amsmath,amssymb,geometry}
\usepackage{graphicx}
\usepackage{hyperref}
\usepackage{subfig}
\usepackage{biblatex}
\begin{document}
\title{Neumann-Neumann Waveform Relaxation Method for Hyperbolic PDE with Time Delay}
%
%
\author{Deeksha Tomer\inst{1} }
\authorrunning{D Tomer}
%
\institute{Department of Mathematics, Indian Institute of Technology Bhubaneswar\\
\email{a21ma09002@iitbbs.ac.in}}
\maketitle              
\begin{abstract}
Hyperbolic PDEs with time delay are essential for modeling a wide range of real-world applications. Since closed-form solutions are often not attainable, developing accurate and efficient numerical methods becomes crucial. In this work a novel substructuring waveform relaxation method namely  Neumann-Neumann Waveform Relaxation (NNWR) has been implemented for numerically solving hyperbolic delay PDEs in the case of non-uniform subdomain partitioning. The convergence behavior is first analyzed using Fourier analysis to obtain an estimate, followed by a demonstration of finite-step convergence through Laplace transform techniques. Various test cases are presented as numerical illustrations. \footnote{Accepted in 12th International Conference on Mathematics and Computing, 2026 }


\keywords{Neumann-Neumann \and Waveform Relaxation  \and Hyperbolic PDE \and Time Delay \and Domain Decomposition.}
\end{abstract}

\section{Introduction}
Hyperbolic partial differential equations (PDEs) with time delay play a central role in modeling phenomena where wave propagation is influenced by past system states. These delays can originate from signal transmission times, memory effects, or feedback mechanisms, and they add a layer of complexity to the dynamics. This makes both analytical and numerical treatment significantly more challenging compared to classical hyperbolic PDEs. Time-delayed hyperbolic equations arise in numerous fields including electrodynamics, population dynamics, traffic flow modeling, and viscoelastic materials \cite{schiesser2019time, erneux2009applied}.
In engineering applications, such models are used to represent delayed control systems and wave propagation in media with memory, such as those seen in aerospace engineering and telecommunications \cite{erneux2009applied}. In neuroscience, delayed hyperbolic equations model the time it takes for signals to travel through neurons and synapses \cite{schiesser2019time}. 

Numerically solving such systems is non-trivial, especially when the spatial domain is large or heterogeneous. Parallel numerical methods for solving hyperbolic PDEs with time delays are understudied in the literature and remain largely unexplored. This motivates the use of domain decomposition methods, which allow for parallel computation and better scalability. 

We explore the NNWR method, which specifically applies Neumann transmission conditions at the interfaces between subdomains. It is especially effective for hyperbolic problems than Dirichlet-based methods, as it can be naturally extended to nonuniform multiple subdomains \cite{Gander2003,sana2023dirichlet,mandal2017neumann}, which are frequently encountered in practical applications. Moreover, waveform relaxation \cite{Gander2015} is highly amenable to parallelization, making it attractive for high-performance computing applications . In the NNWR framework, initial Dirichlet problems are solved independently in each subdomain, after which a Neumann-based correction is applied at the interface.
Consider the classical wave equation defined over the spatial domain \(\Omega = (0, D) \):
\[
\frac{\partial^2 w}{\partial t^2} = c^2 \frac{\partial^2 w}{\partial x^2}+p(x,t), \quad x \in \Omega, \quad t > 0,
\]
along with the initial conditions given as
\[
w(x, 0) = w_0(x), \quad \frac{\partial w}{\partial t}(x, 0) = v_0(x),
\]
and the boundary conditions specified as
\[
w(0, t) = q_0(t), \quad w(D, t) = q_D(t).
\]
We decompose the given domain \(\Omega=(0,D) \) into two disjoint subdomains $D_1 = (0, \Gamma)$ and $D_2 = (\Gamma, D),$
where \( \Gamma \in (0, D) \) is the interface point.
\subsection*{NNWR Iterative Procedure}
At each iteration \( k \), the NNWR method perform following steps:
\subsection*{Step I: Solve Dirichlet subproblems on the subdomains \( D_1 \) and \(D_2 \)}
\[ 
\left\{\begin{array}{rl}
\partial_{tt} w_1^k&=c^2\partial _{xx} w_1^k +p,\quad\text{in}\quad D_1\times[0,T],\\ 
w_1^k(x,0)&=w_0(x), \\
\partial_tw_1^k(x,0)&=v_0(x),\\
 w_1^k(0,t)&=q_0(t),\\ 
 w_1^k(\Gamma,t)&= l^{k-1}(t),\\
\end{array}\right.\ \  
\left\{\begin{array}{rl}
\partial_{tt} w_2^k&=c^2\partial _{xx} w_2^k+p,\quad\text{in}\quad D_2\times[0,T],\\
w_2^k(x,0)&=w_0(x), \\
\partial_tw_2^k(x,0)&=v_0(x),\\
w_2^k(\Gamma,t)&= l^{k-1}(t),\\
w_2^k(D,t)&=q_D(t).\\ 
\end{array}\right.
\]

\subsection*{Step II: Solve Neumann subproblems on the subdomains \( D_1 \) and \( D_2 \)}
\[ 
\left\{\begin{array}{rl}
\partial_{tt} \psi_1^k&=c^2\partial _{xx} \psi_1^k,\quad\text{in}\quad D_1\times[0,T],\\ 
 \psi_1^k(x,0)&=0, \\
 \partial_t\psi_1^k(x,0)&=0,\\
  \psi_1^k(0,t)&=0,\\
 \partial_n\psi_1^k(\Gamma,t)&= \partial_nw_1^k+\partial_nw_2^k,\\
\end{array}\right.\ \  
\left\{\begin{array}{rl}
\partial_{tt}\psi_2^k&=c^2\partial _{xx} \psi_2^k,\quad\text{in}\quad D_2\times[0,T],\\
\psi_2^k(x,0)&=0, \\
\partial_t\psi_2^k(x,0)&=0,\\
\partial_n \psi_2^k(\Gamma,t)&=\partial_nw_1^k+\partial_nw_2^k,\\
\psi_2^k(D,t)&= 0,
\end{array}\right.
\]
along with the update condition applied at the interface
\[
l^{k}(t)=l^{k-1}(t)-\theta\{ \psi_1^{k}\mid _{\Gamma\times(0, T)} +\psi_2^{k}\mid _{\Gamma\times(0, T)}\}.\]
where relaxation parameter denoted by \( \theta \in (0, 1] \) and $l^0$ is the initial approximation at the interface.
The model problem under consideration is a linear second order wave equation incorporating a constant time lag or delay $\tau$, as considered in  \cite{Rodriguez},
\begin{equation}
\label{eq_1}
v_{tt}=c^2v_{xx}+ \lambda v(x,t-\tau),\  t>\tau, x \  \text{in} \ \Omega\subset \mathbb{R}^d,
 \end{equation}
subject to  an initial condition
$$ v(x,t)=\rho (x,t),\  0\leq t\leq \tau, x \  \text{in} \ \Omega, $$
along with the Dirichlet conditions on boundary,
$$
 v(x,t)=\chi(t), t\geq 0, x\in\partial\Omega.
$$
 Where $c$ represents the speed of wave propagation and $\lambda$ is a freely chosen parameter. Well-posedness has been thoroughly addressed in the studies presented in \cite{liu2016well,fridman2009exponential}.
 

In this work, we implement the NNWR method for solving hyperbolic PDEs with time delay in non-uniform subdomains. We begin by establishing convergence estimates via Fourier analysis and then derive conditions for finite-step convergence using Laplace transform techniques, providing a rigorous theoretical foundation for the method. The proposed approach is validated through numerical experiments that highlight its efficiency and robustness under various domain decompositions and time window lengths.

\section{ Analysis of NNWR for Asymmetric Decomposition }

The NNWR method for two non-overlapping subdomains of unequal size, \(D_1 = (-a, 0)\) and \(D_2 = (0, b)\), with an initial value \(q^0(t)\) as a guess  along the interface boundary \(\Gamma\), is described as:\\
Dirichlet subproblems formulated on subdomains $D_j$, where $j=1,2$ for the error equations are:
\begin{equation}\label{eq_8}
\left\{\begin{array}{rl}
\partial_{tt} e_j^k-c ^2\partial _{xx}e_j-\lambda e_j(x, t-\tau )&=0, \ \ \ (x, t) \text{ in }  D_j\times (0, T), \\ 
 e_j^k(x, t)&=0, \ \ \ (x, t) \text{ in } D_j\times[-\tau,0], \\ 
 \partial_te_j^k(x, t)&=0, \ \ \ (x, t) \text{ in } D_j\times[-\tau,0], \\ 
e_j^k&=0, \ \ \  \textnormal{on }  \partial D_j\backslash\Gamma,  \\ 
e_j^k&=q^{k-1},\ \ \  \textnormal{on } \Gamma.
\end{array}\right.
\end{equation}
Next, the Neumann subproblems are solved within the subdomains $D_j$, $j=1,2$:
\begin{equation}\label{eq_9}
\left\{\begin{array}{rl}
\partial_{tt} \psi_j^k-c ^2\partial _{xx}\psi_j^k-\lambda \psi_j^k(x, t-\tau )&=0,\ \ \  (x, t)\text{ in } D_j\times (0, T),  \\ 
 \psi_j^k(x, t)&=0,\ \ \  (x, t)\text{ in } D_j\times[-\tau,0], \\ 
 \partial_t\psi_j^k(x, t)&=0,\ \ \  (x, t) \  \text{in} \ D_j\times[-\tau,0], \\
 \partial _n\psi_j^k&=\partial _ne_1^k+\partial_ne_2^k,\ \ \   \textnormal{on }   \Gamma,\\
\psi_j^k&=0,\ \ \   \textnormal{on } \partial D_j\backslash\Gamma.
\end{array}\right.
\end{equation}
 The condition used to update across the interface boundary is:
\begin{equation}\label{eq_10}
q^{k}(t)=q^{k-1}(t)-\theta\{ \psi_1^{k}\mid _{\Gamma\times(0, T]} +\psi_2^{k}\mid _{\Gamma\times(0, T]}\}.
\end{equation}
\subsection{Fourier-Based Convergence Estimate}
Before proving the theorem (\ref{fourier:thmnnwr}) first we consider following Lemmas.
\begin{lemma}\label{lem_4}
    The function $g(x) = \frac{\sinh^2((a-b)x)}{\sinh(2ax)\sinh(2bx)}$,$\forall x>0$ and $a,b>0$ is a monotonically decreasing function and lies in between $\left ( 0,\frac{(a-b)^2}{4ab} \right )$.
\end{lemma}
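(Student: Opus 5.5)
The plan is to reduce $g$ to an explicit function of a single ratio of hyperbolic sines, prove that this ratio is monotone, and then read off the two limits. Throughout I assume $a\neq b$. For $a=b$ we have $g\equiv 0$, and the interval in Lemma~\ref{lem_4} degenerates. By symmetry I may also assume $a>b$, since $g$ depends on $a-b$ only through $\sinh^2$.

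\textbf{Step 1 (rewrite $g$).} Using the product-to-sum formula and the half-angle formula,
\[
\sinh(2ax)\sinh(2bx)=\tfrac12\bigl(\cosh(2(a+b)x)-\cosh(2(a-b)x)\bigr),\qquad \sinh^2((a-b)x)=\tfrac12\bigl(\cosh(2(a-b)x)-1\bigr).
\]
Writing $C_\pm=\cosh(2(a\pm b)x)$, these give
\[
\frac{1}{g(x)}=\frac{C_+-C_-}{C_--1}=\frac{C_+-1}{C_--1}-1=h(x)^2-1,
\qquad h(x):=\frac{\sinh((a+b)x)}{\sinh((a-b)x)} .
\]
Since $a+b>a-b>0$, we have $h(x)>1$ for $x>0$. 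Hence $g=1/(h^2-1)>0$, and $g$ is strictly decreasing exactly when $h$ is strictly increasing.

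\textbf{Step 2 (monotonicity of $h$).} This is the only step with real content. Set $\alpha=a+b>\beta=a-b>0$. The sign of $h'(x)$ is the sign of
\[
\alpha\cosh(\alpha x)\sinh(\beta x)-\beta\sinh(\alpha x)\cosh(\beta x).
\]
Dividing by $\alpha\beta x\cosh(\alpha x)\cosh(\beta x)>0$, this sign equals the sign of
\[
\frac{\tanh(\beta x)}{\beta x}-\frac{\tanh(\alpha x)}{\alpha x}.
\]
So it suffices to show that $u\mapsto \tanh(u)/u$ is strictly decreasing on $(0,\infty)$. Its derivative is
\[
\frac{u\,\mathrm{sech}^2u-\tanh u}{u^2}=\frac{2u-\sinh(2u)}{2u^2\cosh^2u},
\]
which is negative because $\sinh(2u)>2u$ for $u>0$. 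Since $\beta x<\alpha x$, the displayed difference is positive. Therefore $h'>0$ and $g$ is strictly decreasing.

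\textbf{Step 3 (range).} As $x\to0^+$, $h(x)\to \alpha/\beta=(a+b)/(a-b)$, so
\[
g(x)\to \frac{1}{\frac{(a+b)^2}{(a-b)^2}-1}=\frac{(a-b)^2}{4ab}.
\]
As $x\to\infty$, $h(x)\sim e^{(\alpha-\beta)x}=e^{2bx}\to\infty$, so $g(x)\to 0$. Because $g$ is strictly decreasing on $(0,\infty)$, neither endpoint value is attained. Hence $g(x)\in\bigl(0,\tfrac{(a-b)^2}{4ab}\bigr)$ for all $x>0$, which proves Lemma~\ref{lem_4}.
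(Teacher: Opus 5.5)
Your proof is correct, and it takes a different route from the paper's at the one step that carries content. The paper computes only the two endpoint limits. As $x\to\infty$ it argues that the denominator $\sinh(2ax)\sinh(2bx)$ dominates, and as $x\to0^+$ the ratio tends to $(a-b)^2/(4ab)$. It then asserts monotonicity with a circular justification: ``since $g(x)$ is decreasing (i.e.\ $g'(x)<0$) \dots\ it must be monotonically decreasing.'' Its remark that $\sinh$ is increasing says nothing about a ratio of such functions.

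Your identity $1/g=h^2-1$, with $h(x)=\sinh((a+b)x)/\sinh((a-b)x)$, reduces the monotonicity of $g$ to the elementary fact that $\tanh(u)/u$ is strictly decreasing, which you verify via $\sinh(2u)>2u$. That strict monotonicity is exactly what lets you conclude that $g$ stays strictly below its limit at $0$ and strictly above $0$. The endpoint limits alone, which is all the paper's argument really establishes, would not give the claimed range. The limit computations themselves agree with the paper's; you obtain the one at $0$ from $h\to(a+b)/(a-b)$ rather than by Taylor expansion.

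The paper's version buys only brevity. Yours buys an actual proof, and it also makes explicit the hypothesis $a\neq b$ that the statement silently needs. For $a=b$ one has $g\equiv0$, and the interval $\bigl(0,\frac{(a-b)^2}{4ab}\bigr)$ is empty.
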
 
\begin{proof}
Since $\sinh(x)$ is an increasing function, so evaluating $g(x)$ as $x\to \infty$ we get,
$$
\quad \frac{\sinh^2((a-b)x)}{\sinh(2ax)\sinh(2bx)} \to 0,
$$
because \( \sinh(2ax)\sinh(2bx) \) dominates \( \sinh^2((a-b)x) \) for \( a, b > 0 \).
Since \( g(x) \) is decreasing (i.e., $g'(x)<0$) and approaches \( 0 \) as \( x \to \infty \), it must be monotonically decreasing for all \( x > 0 \).
$$\text{As } x \to 0, \quad \frac{\sinh^2((a-b)x)}{\sinh(2ax)\sinh(2bx)} \to \frac{(a-b)^2}{4ab},$$
So $$\forall x> 0, f(x)\in\left ( 0,\frac{(a-b)^2}{4ab} \right ).$$
\end{proof}
\begin{lemma}\label{lem_5} 
Let \( \Re(u) \) and \( \Im(u) \) correspond to the real component and imaginary component of the complex number \( u \), respectively. Consider the function 
\[
f(u) = \frac{\sinh^2((a-b)u)}{\sinh(2au)\sinh(2bu)},
\]
where \( a, b > 0 \). The function \( f(u) \) satisfies the inequality 
\[
|f(u)| < \frac{(a-b)^2}{4ab}, \quad \forall u \in \mathbb{D},
\]
where 
\[
\mathbb{D} = \left\{ u \in \mathbb{C} \; \middle| \; \Re(u) > 0, \; \Im(u) > 0, \; \Im(u) < \Re(u) \right\}.
\]
\end{lemma}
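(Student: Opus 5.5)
The plan is to use the maximum modulus principle on the sector $\mathbb{D}=\{0<\arg u<\pi/4\}$. We then reduce the bound to three boundary pieces: the positive real axis, the ray $\arg u=\pi/4$, and the behaviour at $0$ and at $\infty$. Lemma~\ref{lem_4} handles the real axis. The case $a=b$ is degenerate: there $f\equiv 0$ and the bound $(a-b)^2/(4ab)=0$ cannot hold strictly. So we assume $a\neq b$, and set $d=a-b$ and $M=d^2/(4ab)$.

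\textbf{Step 1 (analyticity and behaviour at $0$, $\infty$).} The zeros of $\sinh(2au)\sinh(2bu)$ are $u=i n\pi/(2a)$ and $u=i n\pi/(2b)$. These lie on the imaginary axis, so $f$ is analytic on a neighbourhood of $\overline{\mathbb{D}}\setminus\{0\}$. At $u=0$ the singularity is removable, with $f(0)=M$. Using $\sinh z=z\prod_{n\ge1}\bigl(1+z^2/(n^2\pi^2)\bigr)$ we get
\[
f(u)=M\prod_{n\ge1}\frac{\bigl(1+d^2u^2/(n^2\pi^2)\bigr)^2}{\bigl(1+4a^2u^2/(n^2\pi^2)\bigr)\bigl(1+4b^2u^2/(n^2\pi^2)\bigr)},
\]
and near $0$ this gives $f(u)=M\bigl(1-\tfrac{(a+b)^2}{3}u^2+O(u^4)\bigr)$. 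As $|u|\to\infty$ inside the closed sector we have $\Re u\ge|u|/\sqrt2\to\infty$, so $|\sinh(cu)|\sim e^{c\Re u}/2$. Hence $|f(u)|\lesssim e^{2|d|\Re u-2(a+b)\Re u}\to0$, uniformly in the sector.

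\textbf{Step 2 (boundary rays).} On $u=x>0$, Lemma~\ref{lem_4} gives $0<f(x)<M$. On the ray $u=x(1+i)$ we use $|\sinh(p+iq)|^2=\tfrac12(\cosh 2p-\cos 2q)$ and set $h(s)=\cosh s-\cos s$. With $\varphi(s)=h(s)/s^2=2\sum_{k\ge0}s^{4k}/(4k+2)!$, the required inequality $|f(x(1+i))|\le M$ becomes
\[
\varphi(2|d|x)^2\le\varphi(4ax)\,\varphi(4bx).
\]
The function $\varphi$ is a power series in $s^4$ with positive coefficients. It is therefore increasing and log-convex in $\log s$, which gives $\varphi(4ax)\varphi(4bx)\ge\varphi(4\sqrt{ab}\,x)^2$.

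This settles the case $2|d|\le4\sqrt{ab}$ by monotonicity. When the ratio $a/b$ is large this case fails, and I would instead argue factor by factor in the product formula. With $u^2=2ix^2$ purely imaginary, each factor has modulus
\[
\frac{1+d^4t^2}{\sqrt{(1+16a^4t^2)(1+16b^4t^2)}},\qquad t=\frac{2x^2}{n^2\pi^2}.
\]
Alternatively I would compare the Taylor coefficients of $\varphi(2|d|x)^2$ and $\varphi(4ax)\varphi(4bx)$ directly, using $|d|<\max(a,b)$.

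\textbf{Step 3 (conclusion).} Since $|f|\le M$ on $\partial\mathbb{D}$ (including $0$ and $\infty$) and $f$ is nonconstant, the maximum modulus principle gives $|f|<M$ in the open sector. The expansion in Step 1 confirms this near the corner, because $\Re(u^2)>0$ in $\mathbb{D}$.

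\textbf{Main obstacle.} I expect the main difficulty to be the estimate on the ray $\arg u=\pi/4$ when $a/b$ is far from $1$. There, a single factor of the product can exceed $1$ in modulus, for example when $d^2>4ab$ and $t$ is large. The argument must then rely on the combined exponential decay of the whole expression rather than on factor-wise bounds. I would first try the log-convexity and coefficient comparison for $\varphi$, and fall back on a numerical or asymptotic split: small $x$ via the Taylor expansion, large $x$ via exponential decay, and intermediate $x$ via the monotonicity of $\varphi$.
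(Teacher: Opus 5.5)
Your route is the paper's: apply the maximum modulus principle on the sector $0<\arg u<\pi/4$, handle the real axis with Lemma~\ref{lem_4}, and do the remaining work on the ray $\arg u=\pi/4$. The paper, however, never proves the bound on that ray. It writes down the maximum over the two rays and cites a maximum-modulus theorem, with only the crude interior bound $|f|<2\frac{(a-b)^2}{4ab}$ in hand. Your decay estimate at $\infty$ is what makes the principle legitimate on the unbounded sector. Your reduction of the diagonal bound to $\varphi(2|d|x)^2\le\varphi(4ax)\varphi(4bx)$ is correct. The log-convexity argument proves it whenever $d^2\le 4ab$. You are also right that $a=b$ must be excluded: there $f\equiv 0=M$, so the statement as printed is false.

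What you leave open is the diagonal bound for $d^2>4ab$, i.e.\ $a/b\notin[3-2\sqrt2,\,3+2\sqrt2]$. Factorwise bounds fail there, as you note, and your fallback split leaves intermediate $x$ unjustified. The Taylor-coefficient comparison you mention does close it, for all $a\neq b$ at once, so no case split is needed. Write $\varphi(s)=\sum_{j\ge0}c_js^{4j}$ with $c_j=2/(4j+2)!$, and let $m=\max(a,b)$. Both sides have constant term $1$. For $n\ge1$, the $x^{4n}$-coefficient of $\varphi(4ax)\varphi(4bx)$ is at least $c_n(4m)^{4n}$, from the term with $j=n$ or $k=n$ corresponding to the larger of $a,b$. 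On the other side, for $j+k=n$ we have $c_jc_k=4\binom{4n+4}{4j+2}/(4n+4)!$, and $\sum_j\binom{4n+4}{4j+2}\le 2^{4n+4}$. Together these give
\[
(2|d|)^{4n}\sum_{j+k=n}c_jc_k\le\frac{64\,(4|d|)^{4n}}{(4n+4)!}.
\]
The ratio of the left coefficient to the right one is therefore at most $\frac{32}{(4n+3)(4n+4)}\bigl(|d|/m\bigr)^{4n}\le\frac{32}{56}<1$. Hence $\varphi(2|d|x)^2<\varphi(4ax)\varphi(4bx)$ for every $x>0$, and your Step~3 goes through.

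The same trick also justifies Lemma~\ref{lem_4}, whose bound you use but whose proof in the paper is circular. Compare coefficients of $\sinh^2(dx)$ and $M\sinh(2ax)\sinh(2bx)=\tfrac{M}{2}\bigl(\cosh 2(a+b)x-\cosh 2dx\bigr)$, using $1+M=(a+b)^2/(4ab)$.
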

\begin{proof}
    The function $f(u)$ is analytic in the whole domain $\mathbb{D}$, thus the absolute value of given function is,
    $$\left | f(u)\right |=\frac{\sinh^2\left ( \left ( a-b \right )\Re (u) \right )+\sin^2((a-b)\Im (u))}{\sqrt{\sinh^2(2a\Re (u))+\sin^2(2a\Im(u))}\sqrt{\sinh^2(2b\Re (u))+\sin^2(2b\Im(u))}}$$
    since,$\Im(u)<\Re(u)$ so   $ |f(u)|<\frac{2\sinh^2((a-b)\Re(u))}{\sinh(2a\Re(u))\sinh(2b\Re(u))}$,\\
    then by using Lemma \ref{lem_4} we get, $|f(u)|<2\frac{(a-b)^2}{4ab}$.\\
    Since we get maximum value at the boundary of the domain $\mathbb{D}$, we derive following inequality:
    $$\left | f(u)\right |=\max\left\{ \frac{\sinh^2((a-b)\Re(u))}{\sinh(2a\Re(u))\sinh(2b\Re(u))},\left | \frac{\sinh^2((a-b)(1+i)\Re(u))}{\sinh(2a(1+i)\Re(u))\sinh(2b(1+i)\Re(u))}\right | \right\}$$
    By theorem $15.1$ in \cite{complex}, we get $|f(u)|<\frac{(a-b)^2}{4ab} \forall u \in \mathbb{D}$.
\end{proof}
\begin{theorem}

 The NNWR algorithm as defined in equations (\ref{eq_8})-(\ref{eq_10}) for the hyperbolic partial differential equation (PDE) with time delay (\ref{eq_1}), achieves linear convergence for $t\in (0,\infty)$ when $\theta=1/4$ with the estimate, $$\left\| q^k\right\|_{L^2(\Gamma _T)}\leq \left ( \frac{(a-b)^2}{4ab} \right )^k\left\| q^0\right\|_{L^2(\Gamma_T)}.$$
 \label{fourier:thmnnwr}
\end{theorem}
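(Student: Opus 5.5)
We plan to work in Fourier space in time: extend the error functions by zero outside the time window and transform in $t$ (equivalently, take the Laplace transform with $s = i\omega$ on the imaginary axis). Since all data vanish on $[-\tau,0]$, the delay term becomes multiplication by $e^{-s\tau}$. Each error equation (\ref{eq_8}) then reduces to an ODE in $x$ of the form
\[
c^2\partial_{xx}\hat e_j=(s^2-\lambda e^{-s\tau})\hat e_j .
\]
Set $\sigma(s)=\sqrt{s^2-\lambda e^{-s\tau}}/c$, choosing the branch with $\Re\sigma>0$. The Dirichlet solutions are
\[
\hat e_1^k=\hat q^{k-1}\frac{\sinh(\sigma(x+a))}{\sinh(\sigma a)},\qquad
\hat e_2^k=\hat q^{k-1}\frac{\sinh(\sigma(b-x))}{\sinh(\sigma b)} .
\]
Computing outward normal derivatives at $\Gamma=\{0\}$ gives the interface flux
\[
\partial_n\hat e_1^k+\partial_n\hat e_2^k=\sigma\hat q^{k-1}\bigl(\coth(\sigma a)+\coth(\sigma b)\bigr).
\]
Solving the Neumann problems (\ref{eq_9}) with this flux, we get $\hat\psi_1^k(0)=\tanh(\sigma a)\,(\coth(\sigma a)+\coth(\sigma b))\hat q^{k-1}$, and similarly for $\hat\psi_2^k$. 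Inserting into (\ref{eq_10}) yields the symbol recursion
\[
\hat q^k=\Bigl[1-\theta\bigl(2+\tanh(\sigma a)\coth(\sigma b)+\tanh(\sigma b)\coth(\sigma a)\bigr)\Bigr]\hat q^{k-1}.
\]

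The next step is algebraic simplification. With $\theta=1/4$ the bracket equals $\tfrac14\bigl(2-\tanh(\sigma a)\coth(\sigma b)-\tanh(\sigma b)\coth(\sigma a)\bigr)$. Putting this over the common denominator $\sinh(\sigma a)\cosh(\sigma a)\sinh(\sigma b)\cosh(\sigma b)=\tfrac14\sinh(2\sigma a)\sinh(2\sigma b)$ turns the numerator into $-\sinh^2(\sigma(a-b))$, using $\cosh^2\sinh^2$ cross terms and the identity $\sinh(x-y)=\sinh x\cosh y-\cosh x\sinh y$. Hence
\[
\hat q^k=-\frac{\sinh^2((a-b)\sigma)}{\sinh(2a\sigma)\sinh(2b\sigma)}\,\hat q^{k-1}=-f(\sigma)\,\hat q^{k-1},
\]
where $f$ is the function of Lemma \ref{lem_5}. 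By induction, $\hat q^k=(-f(\sigma))^k\hat q^0$.

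The remaining step is to bound $|f(\sigma(i\omega))|$ uniformly in $\omega$ by $(a-b)^2/(4ab)$. We would argue that $\sigma(i\omega)$ lies in the closure of the region $\mathbb{D}$ of Lemma \ref{lem_5}, or of its conjugate, which suffices because $f(\bar u)=\overline{f(u)}$. Real $\sigma$ is covered by Lemma \ref{lem_4}. Concretely, $\sigma^2=(-\omega^2-\lambda e^{-i\omega\tau})/c^2$, and we would need $|\arg\sigma|\le\pi/4$, i.e. $\Re\sigma^2\ge0$. Then Lemma \ref{lem_5} gives $|\hat q^k(\omega)|\le\bigl((a-b)^2/(4ab)\bigr)^k|\hat q^0(\omega)|$. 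Plancherel's identity converts this into the $L^2$ estimate on $\Gamma_T$, with $T$ treated as $\infty$ because of the zero extension and causality.

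The main obstacle is precisely the localization of $\sigma$ in $\mathbb{D}$. For pure imaginary $s$ and large $|\omega|$, $\sigma^2$ is essentially negative real, so $\sigma$ is nearly imaginary and falls outside $\mathbb{D}$. To handle this, we would first try working on a shifted line $s=\alpha+i\omega$ with $\alpha$ large, which amounts to a weighted $L^2$ norm. There $s^2$ dominates $\lambda e^{-s\tau}$, and we need $|\Im\sigma|<\Re\sigma$, which requires $\alpha\ge|\omega|$. If this cannot be arranged globally, we would accept the estimate under the restriction that $\sigma$ stays in $\overline{\mathbb{D}}$ and state this as the regime in which Lemma \ref{lem_5} applies, consistent with how the paper's lemmas are set up.
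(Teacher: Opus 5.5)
\paragraph{Verdict.} Your proposal has a genuine gap at its final step, the same one you flag yourself. Your derivation up to $\hat q^k=(-f(\sigma))^k\hat q^0$ is correct and follows the paper's route step by step: Dirichlet profiles, interface flux, Neumann correction, the $\theta=1/4$ simplification, then Lemma~\ref{lem_5} and Plancherel. It is in fact more careful than the paper, since with $s=i\omega$ you get the right $\sigma^2=(-\omega^2-\lambda e^{-i\omega\tau})/c^2$ and the right sign of $\hat e_2^k$.

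\paragraph{Why the gap cannot be closed.} The missing step is the uniform bound $|f(\sigma(i\omega))|\le (a-b)^2/(4ab)$, and neither of your remedies supplies it. On a line $s=\alpha+i\omega$ you still have $\sigma\approx s/c$ as $|\omega|\to\infty$, so $|\arg\sigma|\to\pi/2$, and your own requirement $\alpha\ge|\omega|$ fails for every fixed $\alpha$. Restricting to frequencies with $|\arg\sigma|\le\pi/4$ (i.e.\ $\omega^2\le-\lambda\cos(\omega\tau)$) discards all but a bounded set of frequencies, so it says nothing about $\|q^k\|_{L^2(\Gamma_T)}$.

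The obstruction is intrinsic. The sector $\mathbb{D}$ is what one needs for the heat equation, where $\sigma=\sqrt{i\omega}$ lies on $\arg\sigma=\pm\pi/4$. For the wave operator, $\sigma$ lies near the imaginary axis, where $f$ has (near-)poles. For $\lambda=0$, $-f(i\omega/c)=-\sin^2((a-b)\omega/c)/\bigl(\sin(2a\omega/c)\sin(2b\omega/c)\bigr)$ is unbounded on $\mathbb{R}$.

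The time domain gives an explicit counterexample. With $A=e^{-2a\sigma}$ and $B=e^{-2b\sigma}$,
\[
-f(\sigma)=-\frac{(A-B)^2}{(1-A^2)(1-B^2)}=-A^2-B^2+2AB+\cdots,
\]
so $q^1(t)=-q^0(t-4a/c)-q^0(t-4b/c)+2q^0(t-2(a+b)/c)+\cdots$. The lower-order delay term only adds to each $\mathcal{L}^{-1}\{e^{-\beta\sigma}\}$ contributions supported in $t\ge\beta/c+\tau$, behind the front $\delta(t-\beta/c)$. Take $q^0$ supported in $[0,\epsilon]$ and $T$ just above $4\min\{a,b\}/c+\epsilon$, before the next front and before $4\min\{a,b\}/c+\tau$. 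Then $q^1=-q^0(\cdot-4\min\{a,b\}/c)$ on $[0,T]$, so $\|q^1\|_{L^2(\Gamma_T)}=\|q^0\|_{L^2(\Gamma_T)}$. This contradicts the claimed factor $(a-b)^2/(4ab)$ whenever that factor is less than $1$. No choice of contour or weight can rescue the estimate.

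\paragraph{The paper's proof.} It has the same hole, only hidden. It sets $\xi^2=(\omega^2+\lambda e^{-i\omega\tau})/c^2$ but uses $\sinh$ profiles, which is a sign error. The transformed equation is $c^2\partial_{xx}\hat e=-(\omega^2+\lambda e^{-i\omega\tau})\hat e$, so with that $\xi$ the profiles are $\sin$; the correct $\sinh$ variable is your $\sigma$, with $\sigma^2=-\xi^2$. The paper then applies Lemma~\ref{lem_5} without checking that $\xi\in\mathbb{D}$. It also writes $\hat q^k=-\frac{(a-b)^2}{4ab}\hat q^{k-1}$ where at best a bound on $|\hat q^k|$ is meant.

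You located the crux correctly. The right conclusion, however, is that the Fourier-based linear estimate fails for this hyperbolic problem. The appropriate statement here is finite-step convergence, as in the Laplace-based theorem.
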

\begin{proof}
 By defining
\[
\hat{e}_j^k(x, \omega) = \frac{1}{2\pi} \int_{-\infty}^\infty e_j^k(x, t) \, e^{-i\omega t} \, dt,
\]
as the Fourier transform of \( e_j^k(x, t) \) in time and assume that
\[
\xi^2 = \frac{\omega^2 + \lambda e^{-i\omega \tau}}{c^2},
\] which incorporates the effect of the time delay into the frequency-domain representation. Then the solution for Dirichlet problems after applying Fourier transform for subdomain $D_1$ and $D_2$ are,
    \begin{align*}
\hat e_1^k(x,\omega)&=\frac{\hat q^{k-1}(\omega)\sinh(\xi (a+x))}{\sinh(\xi a)},
\\
\hat e_2^k(x,\omega)&=-\frac{\hat q^{k-1}(\omega)\sinh(\xi (b-x))}{\sinh(\xi b)}.\\
\end{align*}
Now the solutions for Neumann subproblems for subdomain $D_1$ and $D_2$ are,
\begin{align*}
\hat \psi _1^k(x,\omega)&=\frac{\hat q^{k-1}(\omega)(\coth(\xi a)+\coth(\xi b))\sinh(\xi (a+x))}{\cosh(\xi a)},
\\
\hat \psi_2^k(x,\omega)&=-\frac{\hat q^{k-1}(\omega)(\coth(\xi a)+\coth(\xi b))\sinh(\xi (b-x))}{\cosh(\xi b)}.\\
\end{align*}
Now from the update condition we get,
$$\hat q^k=\hat q^{k-1}\left [ 1-\theta \left ( 2+\frac{\tanh(\xi a)}{\tanh(\xi b)}+\frac{\tanh(\xi b)}{\tanh(\xi a)}\right ) \right ],$$
for $\theta = 1/4$,
$$ \hat q^k=\left [ \frac{1}{2}-\frac{1}{4}\left ( \frac{\sinh(\xi a)\cosh(\xi b)}{\cosh(\xi a)\sinh(\xi b)} +\frac{\sinh(\xi b)\cosh(\xi a)}{\sinh(\xi a)\cosh(\xi b)} \right )
\right ]\hat q^{k-1},$$
which on simplification gives,
$$\hat q^k=-\frac{\sinh^2((a-b)\xi)}{\sinh(2\xi a)\sinh(2\xi b)}\hat q ^{k-1}.$$
By using Lemma (\ref{lem_5}) we get,
$$\hat q^k=-\frac{(a-b)^2}{4ab}\hat q^{k-1},$$
Finally, using Parseval-Plancherel identity gives, $\left\| q^k\right\|_{L^2(\Gamma _T)}\leq \left ( \frac{(a-b)^2}{4ab} \right )^k\left\| q^0\right\|_{L^2(\Gamma_T)}.$\\
 Thus the NNWR method converges when $\frac{(a-b)^2}{4ab}<1$.
 \end{proof}
 \textbf{Remark} The NNWR converges when $a/b$ belongs to $\left ( \frac{6-\sqrt{32}}{2},\frac{6+\sqrt{32}}{2} \right )$, which we obtain by solving $\frac{(a-b)^2}{4ab}<1\Rightarrow \left ( (a/b)^{2}-6\left ( a/b \right )+1 \right )< 0$.
\subsection{Laplace Transform Based Convergence}
\begin{theorem}
    The NNWR algorithm for the Hyperbolic PDE with constant time delay attains convergence for $\theta=1/4$ in at most $k+1$ iterations for asymmetric decomposition of a domain in two subdomains, if the length of the time window $T$ fulfills the condition $ T/k\leq 4 \min\left\{ a/c,b/c\right\}$, where $a, b$ are subdomain sizes and $c$ is the speed of wave.
 \end{theorem}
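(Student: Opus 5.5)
We work in the Laplace domain. Apply the Laplace transform in time to the error equations (\ref{eq_8})--(\ref{eq_10}); the delayed history vanishes, so the delay term gives $\lambda e^{-s\tau}\hat e_j^k$. Define
\[
\xi(s)=\frac{\sqrt{s^2-\lambda e^{-s\tau}}}{c}.
\]
Repeating the computation of Theorem \ref{fourier:thmnnwr} with this $\xi$ gives, for $\theta=1/4$,
\[
\hat q^k(s)=-\frac{\sinh^2((a-b)\xi)}{\sinh(2a\xi)\sinh(2b\xi)}\,\hat q^{k-1}(s).
\]
Iterating $k$ times yields $\hat q^k=(-1)^k f(\xi)^k\hat q^0$, where $f$ is the function of Lemma \ref{lem_5}.

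Next we expand this factor in exponentials. Using $\sinh z=\tfrac12 e^{z}(1-e^{-2z})$, we factor out the leading exponentials. Taking without loss of generality $a\ge b$, we get
\[
f(\xi)=\frac{e^{2(a-b)\xi}\,(1-e^{-2(a-b)\xi})^2}{e^{2(a+b)\xi}\,(1-e^{-4a\xi})(1-e^{-4b\xi})}=e^{-4b\xi}\,\frac{(1-e^{-2(a-b)\xi})^2}{(1-e^{-4a\xi})(1-e^{-4b\xi})}.
\]
For $\Re\xi>0$ we expand the denominators as geometric series. This gives
\[
f(\xi)^k=\sum_{m\ge 0} c_m\, e^{-(4bk+2\mu_m)\xi}
\]
with nonnegative shifts $\mu_m$. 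So $f^k$ is a superposition of terms $e^{-4\min\{a,b\}k\,\xi}$ times further decaying exponentials; the leading shift is $4k\min\{a,b\}/c$ in units of $s$.

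The main obstacle is that $\xi$ is not exactly $s/c$, because of the delay. I would handle this by writing
\[
e^{-\beta\xi(s)}=e^{-\beta s/c}\,e^{-\beta(\xi(s)-s/c)}.
\]
Then I would argue that the second factor is the Laplace transform of a causal distribution, namely a function (involving Bessel-type terms from the $\lambda e^{-s\tau}$ perturbation) supported in $t\ge0$. Concretely, $\xi-s/c=O(1/s)$ as $\Re s\to\infty$, and it is analytic and bounded in a right half-plane, so Paley--Wiener applies. Hence $e^{-\beta\xi}$ is the transform of a kernel supported in $[\beta/c,\infty)$. An alternative is to avoid transforms for this step and use the finite propagation speed $c$ of the delayed wave equation directly: the delay only feeds back past values along the same characteristics, so the domain of dependence is unchanged. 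I would present the Paley--Wiener route and mention the propagation-speed interpretation.

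By the shifting property, $q^k(t)$ is a sum of convolutions of $q^0$ with kernels supported in $t\ge 4k\min\{a,b\}/c$. Hence $q^k\equiv0$ on $(0,T)$ whenever $T\le 4k\min\{a/c,b/c\}$, which is the hypothesis $T/k\le 4\min\{a/c,b/c\}$. The interface error then vanishes, so one further Dirichlet solve, step $k+1$, produces $e_j^{k+1}\equiv0$ on both subdomains. This gives convergence in at most $k+1$ iterations.
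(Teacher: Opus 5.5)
Your argument is correct. It follows the same skeleton as the paper: Laplace transform of the error recursion, expansion of the $\theta=1/4$ factor into exponentials $e^{-\beta\xi}$, a support argument giving $q^k\equiv0$ on $(0,T)$, then one more Dirichlet solve. Both technical steps, however, are done differently.

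First, you use the factored form $-\sinh^2((a-b)\xi)/(\sinh(2a\xi)\sinh(2b\xi))$ and extract $e^{-4\min\{a,b\}\xi}$ in one line. The paper instead expands $K^a_b+K^b_a$ in double series and then applies a binomial expansion. Your form makes it immediate that only $\min\{a,b\}$ matters.

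Second, and more substantially, you handle the delay by a different key tool.
\begin{itemize}
\item The paper uses Efros' theorem with $Q(s)=s^2-\lambda e^{-\tau s}$ and expands $e^{\lambda\zeta e^{-\tau s}}$ to obtain an explicit sum of shifted kernels $\mathcal{D}(\beta,\lambda,t-\beta-n\tau)$.
\item You factor $e^{-\beta\xi}=e^{-\beta s/c}e^{-\beta(\xi-s/c)}$ and apply the Paley--Wiener--Schwartz support theorem to the second factor, which is bounded and analytic in a right half-plane.
\end{itemize}

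The paper's route aims at an explicit description of the kernel: a primary pulse plus echoes delayed by multiples of $\tau$. It leans, however, on transform pairs that need care here. For instance, $e^{-\zeta s^2}$ grows like $e^{\zeta(\Im s)^2}$ on vertical lines, so it is not a one-sided Laplace transform. Your route needs only analyticity and a growth bound and never computes a kernel.

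Your factorization also recovers the paper's picture. Since $\xi-s/c=-\lambda e^{-\tau s}/\bigl(c(\sqrt{s^2-\lambda e^{-\tau s}}+s)\bigr)$, the kernel of $e^{-\beta\xi}$ is $\delta_{\beta/c}$ plus a distribution supported in $[\beta/c+\tau,\infty)$.

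One simplification: the infinite series is unnecessary, and its term-by-term inversion would need a word of justification. Write $f(\xi)^k=e^{-4k\min\{a,b\}\xi}g(\xi)^k$ with $g=(1-e^{-2|a-b|\xi})^2/\bigl((1-e^{-4a\xi})(1-e^{-4b\xi})\bigr)$. This $g$ is bounded and analytic for $\Re s$ large, so the same support theorem applies directly to $g^k$.
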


 \begin{proof}
     First we apply the Laplace transform and then by induction, the NNWR kernel becomes,
 $$
 q^k(s)=\left [ 1-\theta\left(2+\frac{\tanh\left(a\frac{\sqrt{s^2-\lambda e^{-\tau s}}}{c }\right)}{\tanh\left(b\frac{\sqrt{s^2-\lambda e^{-\tau s}}}{c}\right)}+\frac{\tanh\left(b\frac{\sqrt{s^2-\lambda e^{-\tau s}}}{c }\right)}{\tanh\left(a\frac{\sqrt{s^2-\lambda e^{-\tau s}}}{c }\right)}\right) \right ]^kq^0(s).$$\\
A detailed derivation of this kernel is in \cite{ima}.
  For asymmetric decomposition $a\neq b$, we have
 $$\hat q^k(s)= \left\{ \left ( 1-4\theta \right )-\theta\left ( K^a_b(s)+K^b_a(s) \right )\right\}^k\hat q^0(s), k=1,2\ldots, $$
where $K^a_b(s)= \left (\frac{\tanh\left(a\frac{\sqrt{s^2-\lambda e^{-\tau s}}}{c }\right)}{\tanh\left(b\frac{\sqrt{s^2-\lambda e^{-\tau s}}}{c}\right)}-1\right)$ and the update equation further reduces with $\theta =1/4$, 
 $$\hat q^k(s)= \left\{ -(1/4)\left ( K^a_b(s)+K^b_a(s) \right )\right\}^k\hat q^0(s).$$
Using exponential series expansion and assuming $X=\frac{\sqrt{s^2-\lambda e^{-\tau s}}}{c}$ we get,

{\small
\begin{equation*}
\hat{q}^k(s) = \left\{ 
- \sum_{m=1}^{\infty} \left(e^{-4amX} + e^{-4bmX}\right)
- \sum_{m=1}^{\infty} \sum_{n=1}^{\infty} (-1)^{n-1} \left(e^{-2(nb+ma)X} + e^{-2(mb+na)X} \right)
\right\}^k \hat{q}^0(s).
\end{equation*}
}
 On using binomial expansion, we get
  \begin{align*}
\hat{q}^k(s) &= (-1)^k e^{-4akX} \hat{q}^0(s) + (-1)^k e^{-4bkX} \hat{q}^0(s) \\
&\quad + \left( \sum_{l>k}^{\infty} D_l^{(k)} e^{-4blX} + \sum_{l>k}^{\infty} Z_l^{(k)} e^{-4alX} + \sum_{m+n \geq 2k}^{\infty} J_{m,n}^{(k)} e^{-2(nb + ma)X} \right) \hat{q}^0(s).
\end{align*}
Upon inversion of the Laplace transform, we obtain
 \begin{equation}
    q^k(t)=(-1)^k \mathcal{L}^{-1}\left\{ e^{-4akX}\hat q^0(s)\right \}+(-1)^k\mathcal{L}^{-1}\left\{ e^{-4bkX}\hat q^0(s)\right\}+\mathcal{L}^{-1}\left\{ \text{other terms}\right\}.
    \label{inverselaplace}
\end{equation}
By the \textbf{convolution theorem} (see \cite{schiff}), the inverse Laplace transform of a product is given by the convolution of their respective inverse transforms. Thus,
\[
\mathcal{L}^{-1}\left\{\widehat{\psi}(s) \cdot \widehat{\phi}(s)\right\} = (\psi * \phi)(t),
\]
where,  
\[
\widehat{\psi}(s) = e^{- \beta \sqrt{s^2 - \lambda e^{-\tau s}}}, \beta \geq0 \quad \text{and} \quad \widehat{\phi}(s) = \mathcal{L}\{\phi(t)\}.
\]
To compute the inverse Laplace transform of  
\[
\widehat{\psi}(s) = e^{- \beta \sqrt{s^2 - \lambda e^{-\tau s}}},
\]
we utilize \textbf{Efros' theorem}~\cite{sana2023dirichlet} which states that
if  \( \hat{F}(s) \) and \( \hat{P}(s) e^{-Q(s)\zeta } \) denote the Laplace transforms of given functions \( F(t) \) and \( P(t, \zeta ) \), with \( \zeta  \) treated as a parameter. Then,
\[
\mathcal{L}^{-1}\left\{\hat{F}(Q(s)) \cdot \hat{P}(s)\right\} = \int_0^\infty P(t,\zeta ) \cdot F(\zeta )\, d\zeta .
\]
For our application, we define: $\hat{F}(s) = e^{-\beta \sqrt{s}}$, $Q(s) = s^2 - \lambda e^{-\tau s}$ and $\hat{P}(s) = 1.$
Therefore, Efros’ theorem gives:
\[
\mathcal{L}^{-1}\left\{ e^{-\beta \sqrt{s^2 - \lambda e^{-\tau s}}} \right\}
= \int_0^\infty \mathcal{L}^{-1}\left\{ e^{-s^2 \zeta} \cdot e^{\lambda \zeta e^{-\tau s}} \right\} \cdot \mathcal{L}^{-1}\left\{ e^{-\beta \sqrt{s}} \right\} \, d\zeta.
\]
From standard Laplace transform tables~\cite{oberhettinger}:
\begin{align*}
\mathcal{L}^{-1}\left\{ e^{-\beta \sqrt{s}} \right\} &= \frac{\beta}{2\sqrt{\pi} t^{3/2}} \exp\left( - \frac{\beta^2}{4t} \right), \\
\mathcal{L}^{-1}\left\{ e^{-s^2 \zeta} \right\} &= \frac{1}{2 \sqrt{\pi \zeta}} \exp\left( - \frac{t^2}{4\zeta} \right).
\end{align*}
Using the expansion:
\[
e^{\lambda \zeta e^{-\tau s}} = \sum_{n=0}^{\infty} \frac{(\lambda \zeta)^n}{n!} e^{-n \tau s},
\]
we obtain:
\begin{align*}
&\int_0^\infty \mathcal{L}^{-1}\left\{ \sum_{n=0}^\infty \frac{(\lambda \zeta)^n}{n!} e^{-s^2 \zeta} e^{-n\tau s} \right\} \cdot \frac{\beta}{2\sqrt{\pi} \zeta^{3/2}} \exp\left( - \frac{\beta^2}{4\zeta} \right) \, d\zeta \\
&= \frac{\beta}{2\sqrt{\pi}} \int_0^\infty \mathcal{L}^{-1} \left\{ e^{-s^2 \zeta} \right\} \cdot \frac{1}{\zeta^{3/2}} \exp\left( - \frac{\beta^2}{4\zeta} \right) d\zeta 
+ \sum_{n=1}^{\infty} \mathcal{D}(\beta, \lambda, t - \beta - n\tau),
\end{align*}
The case $\lambda=0$, corresponding to no delay, is captured by the first term, representing the classical wave equation. For \( \lambda = 0 \), we retrieve:
\[
\mathcal{L}^{-1}\left\{ e^{-\beta s} \right\} \cdot \widehat{\phi}(s) = G(t - \beta) \cdot \phi(t - \beta),
\]
as a result of the second translation theorem (see \cite{oberhettinger}) where $G(t-\beta)$ is the Heaviside step function i.e. $G(t - \beta) =
\begin{cases}
0, & \text{for } t < \beta, \\
1, & \text{for } t \geq \beta.
\end{cases}$.\\
The remaining terms \( \mathcal{D}(\beta, \lambda, t - \beta - n\tau) \) encode the effects of successive time delays introduced by the exponential term \( e^{-n\tau s} \). These reflect a sequence of delayed wave responses (refer to Figure\ref{timeshift}).
Combining all contributions, we conclude:
\[
\mathcal{L}^{-1} \left\{ e^{- \beta \sqrt{s^2 - \lambda e^{-\tau s}}} \right\}
= \mathcal{D}(\beta, 0, t - \beta) + \sum_{n \in \mathbb{N}} \mathcal{D}(\beta, \lambda, t - \beta - n\tau).
\]
Accordingly, it follows from equation \eqref{inverselaplace} that
\begin{align*}
q^k(t)&=(-1)^k\left( d_1G(t-4ak/c)q^0(t-4ak/c)+d_2G(t-4bk/c)q^0(t-4bk/c)\right.\\
&\left.+d_3G(t-4ak/c-\tau)q^0(t-4ak/c-\tau)+d_4G(t-4bk/c-\tau)q^0(t-4bk/c-\tau)\right)\\
&+\sum_{\ell>k}\sum_{n\geq0}\left[z_{\ell,n}^{(k)}G(t-n\tau-4al/c)q^0(t-n\tau-4a\ell/c)\right.\\
&\left.+y_{\ell,n}^{(k)}G(t-n\tau-4b\ell/c)q^0(t-n\tau-4b\ell/c)\right]\\
&+\sum_{m+\nu\geq 2k}\sum_{n\geq0}j_{m,\nu,n}^{(k)}G(t-n\tau-2(am+b\nu)/c)q^0(t-n\tau-2(am+b\nu)/c).
\end{align*}
Given that \( d_i,\, z_{\ell,n}^{(k)},\, y_{\ell,n}^{(k)},\, j_{m,\nu,n}^{(k)} \in \mathbb{R} \). Suppose the time window \( T \) is chosen in order to satisfy the condition
\[
\frac{T}{k} \leq 4 \min\left\{ \frac{a}{c},\, \frac{b}{c} \right\},
\]
then, \( q^k(t) \equiv 0 \) $\forall$ \( t \in [0, T] \), ensuring that the NNWR algorithm attains the exact solution at the subsequent iteration. Thus, the NNWR algorithm for wave PDE with constant time delay achieves convergence in at most \( k+1 \) iterations, similar to the wave PDE without delay \cite{DD22}, as the delay terms only introduce additional time lag without affecting the convergence.
 \end{proof}

\begin{figure}[!b]
    \centering
    \subfloat[First subdomain is bigger]{\includegraphics[width=0.44\linewidth]{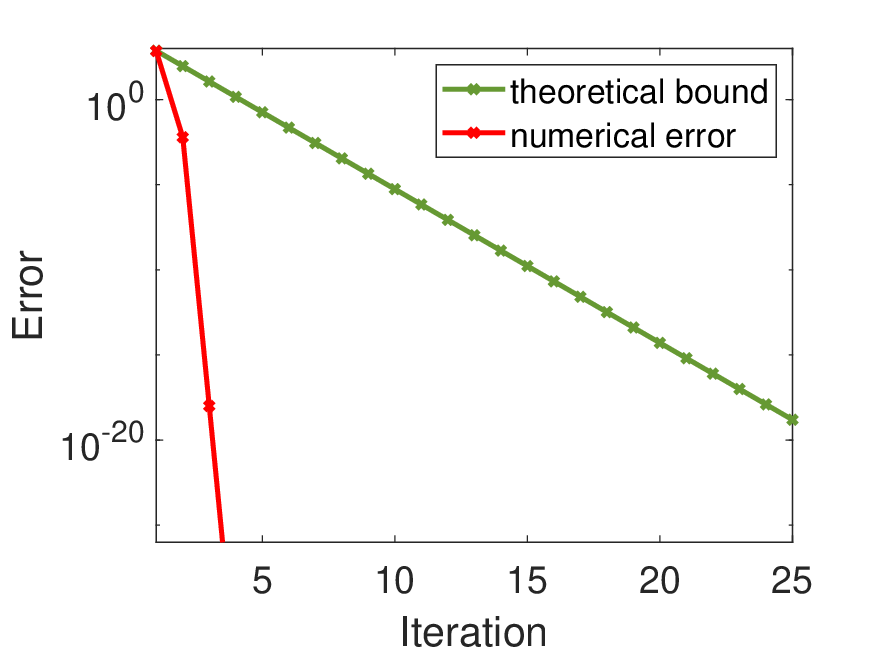}}
    \qquad
    \subfloat[Second subdomain is bigger]{\includegraphics[width=0.42\linewidth]{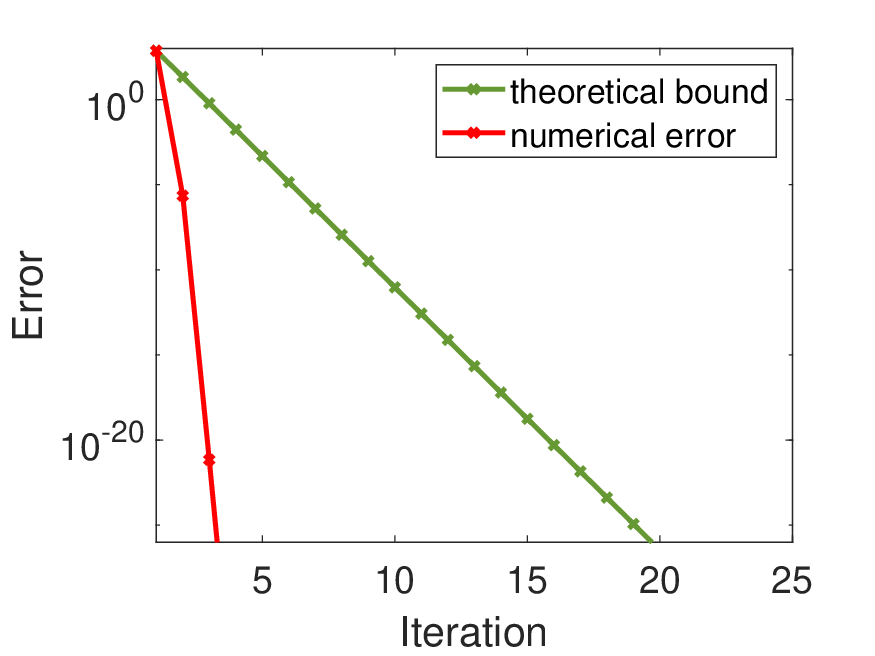}}
    \caption{Comparison of convergence of NNWR method numerically and their theoretical bounds for $\theta =1/4$, Left: when $D_1=(0,4),D_2=(4,6)$, Right: when $D_1=(0,2.5),D_2=(2.5,6)$.}
    \label{fourier_estimate}
\end{figure}

 \begin{figure}[!h]
    \centering
    \includegraphics[width=0.42 \linewidth]{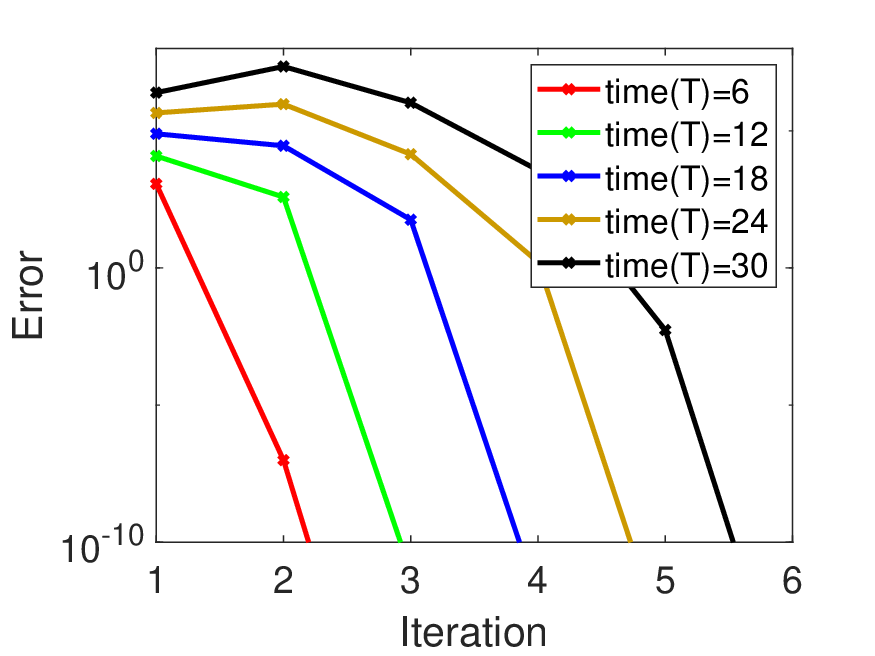}
    \caption{Convergence of Hyperbolic PDE with time delay for different time window T when $\theta =1/4$.} 
     \label{diftime_wave}
    \end{figure}

 \begin{figure}[!h]
    \centering
    \includegraphics[width=0.75 \linewidth]{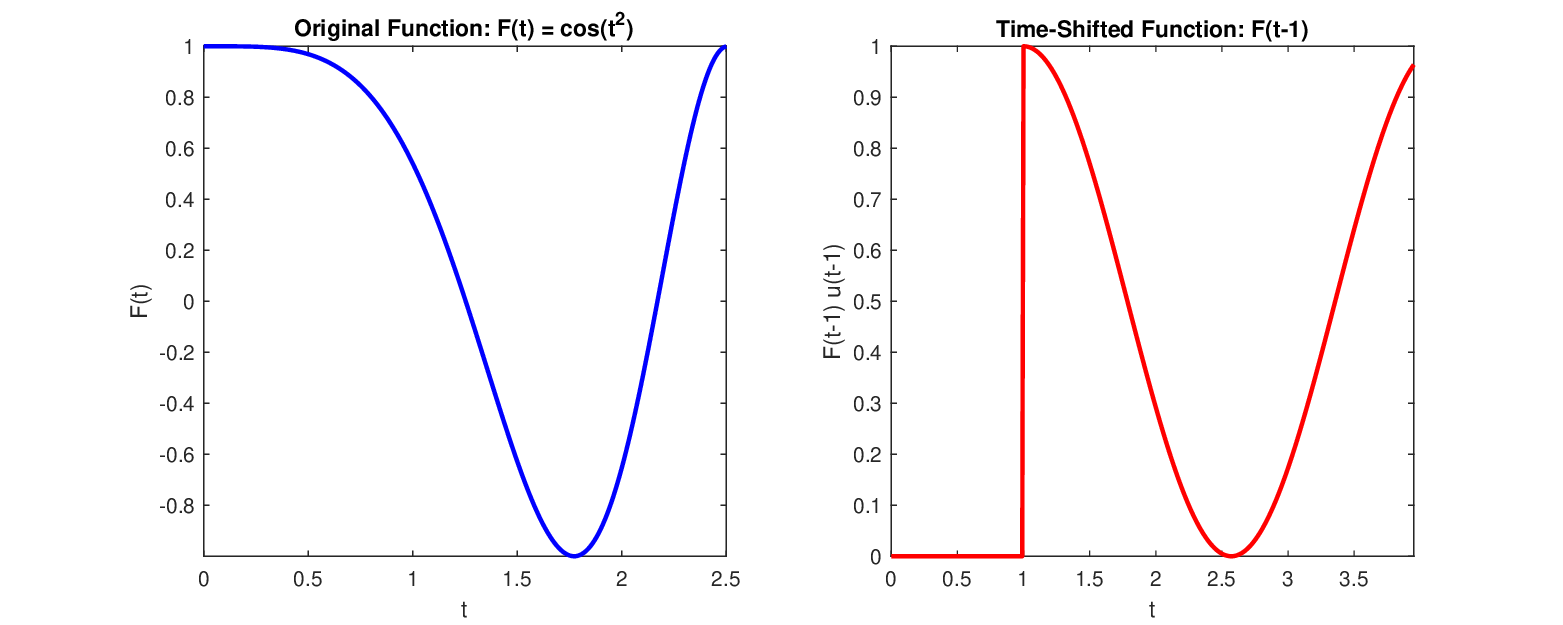}
    \caption{On the left, we show \( \mathcal{L}^{-1}\{ \hat{F}(s) \}\) which is $F(t)=\cos(t^2)$, and on the right, we illustrate the time-shifted version \( \mathcal{L}^{-1}\{ e^{-s} \hat{F}(s) \} \).
} 
     \label{timeshift}
    \end{figure}

\begin{figure}[!h]
    \centering
    \includegraphics[width=0.42 \linewidth]{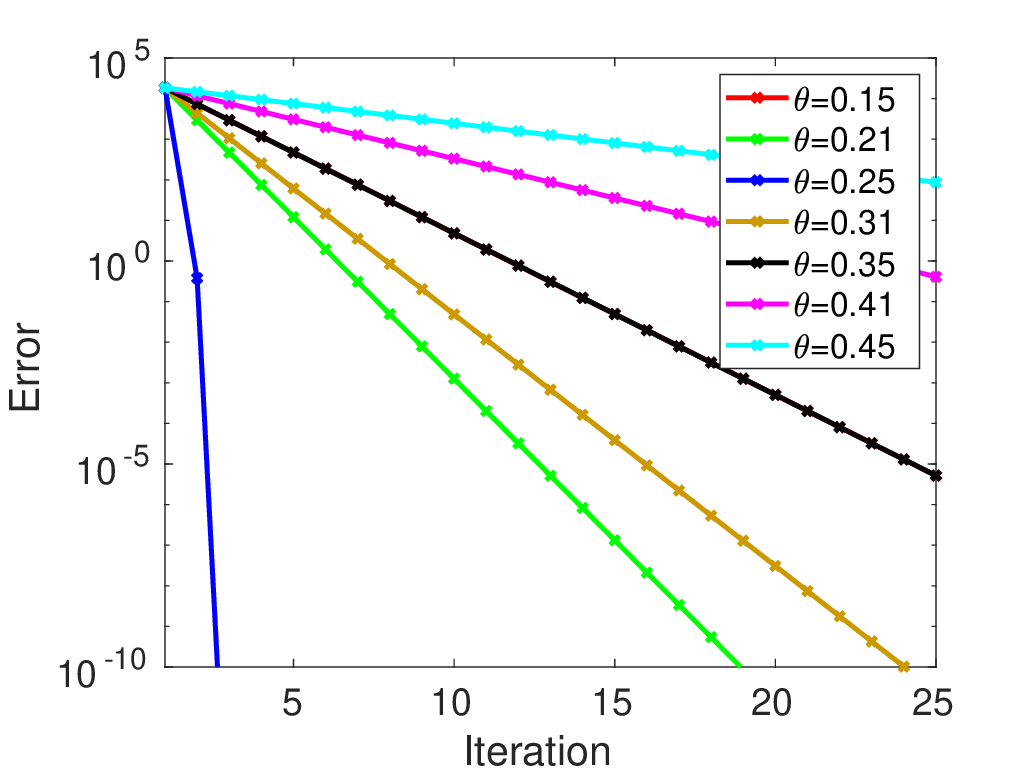}
    \caption{Convergence result of NNWR algorithm for Hyperbolic PDE in 2D with time delay for different values of $\theta$.} 
     \label{nnwr_2d}
    \end{figure}
\section{Numerical Illustration}
The computational domain is divided into non-uniform subdomains. For discretization of wave equation (\ref{eq_1}) we use the \textit{Leapfrog scheme}, employing second-order central difference in both space and time over a uniformly spaced grid points with time step size and spatial step size as \( \Delta t = 2 \times 10^{-2} \) and $\Delta x= 0.1$, respectively. Figure (\ref{fourier_estimate}) illustrates both the theoretical convergence bound and the observed numerical convergence pattern of the NNWR method when the optimal parameter choice \( \theta = 1/4 \). 

 Using the initial value \( q^0(t) = t^2 \) for \( t \in (0, T] \) on the interface, Figure~\ref{diftime_wave} shows the convergence trends of the NNWR method under varying time window lengths \( T \), again for the optimal parameter \( \theta = 1/4 \). Numerical results for the NNWR algorithm in two dimensions are presented for the spatial domain 
\(\Omega = (0, 8) \times (0, 8)\), which is further divided into two subdomains: 
\(D_1 = (0, 6) \times (0, 8)\) and \(D_2 = (6, 8) \times (0, 8)\) and the values of the discretization parameters are set as $\Delta x=\Delta t=\Delta y=0.1$. 
This represents an asymmetric decomposition of the 2D domain. For this configuration, the optimal value we get
for the relaxation parameter \(\theta\) is once again obtained as \(\frac{1}{4}\). See Figure~\ref{nnwr_2d} for illustration.

\section{Conclusion}
We have presented convergence estimates for the NNWR algorithm applied to hyperbolic PDEs with time delay, where the spatial region is decomposed in two asymmetric subdomains. Using the Fourier transform, we establish a linear convergence bound, while the Laplace transform analysis demonstrates finite-step convergence for the optimal value of relaxation parameter, which is $\theta=1/4$. These theoretical results are further supported by numerical experiments.

\printbibliography
\end{document}